\documentclass{article}

\usepackage{latexsym,amssymb}
\usepackage{amsmath}
\usepackage[mathscr]{eucal}
\usepackage{epic,eepic}

\usepackage{graphicx}

\usepackage{color}
\usepackage{enumerate}
\usepackage{comment}
\usepackage[
 anchorcolor=blue,%
 bookmarks=true,%
 bookmarksnumbered=true,%
 colorlinks=true,%
 citecolor=cyan,%
 linkcolor=blue%
]{hyperref}


\definecolor{refkey}{gray}{0.5}
\definecolor{labelkey}{gray}{0.2}

\usepackage{amsthm}
\newtheorem{theorem}{Theorem}[section]

\newtheorem{lemma}[theorem]{Lemma}
\newtheorem{corollary}[theorem]{Corollary}

\theoremstyle{definition}

\theoremstyle{remark}
\newtheorem{remark}[theorem]{Remark}

\theoremstyle{conjecture}
\newtheorem{conj}[theorem]{Conjecture}

\makeatletter

\@addtoreset{equation}{section}
\makeatother

\newcommand{\N}{\mathbb{N}}

\newcommand{\C}{\mathcal{C}}

\newcommand{\R}{\mathbb{R}}
\newcommand{\Sph}{\mathbb{S}}

\newcommand{\g}{\mathrm{g}_{can}}

\newcommand{\Ric}{\mathrm{Ric}}
\newcommand{\diam}{\mathrm{diam}}

\newcommand{\vol}{\mathrm{vol}}

\usepackage{cite}
\usepackage[utf8]{inputenc}
\usepackage{dsfont}
\usepackage[top=2.5cm,bottom=2.5cm,right=2.5cm,left=2.5cm]{geometry}

\begin{document}

\title{A Generalization of Caffarelli’s Contraction Theorem to Nearly Spherical Manifolds}

\author{Yuxin Ge\thanks{IMT, Université de Toulouse, 118, route de
Narbonne, 31062 Toulouse Cedex, France ({\sf yge@math.univ-toulouse.fr})}
\and
Jordan Serres\thanks{LPSM, Sorbonne Université, 4 place Jussieu 75005 Paris, France
({\sf serres@lpsm.paris})}}

\maketitle

\begin{abstract}
We show that every nearly spherical manifold can be realized as the volume-preserving image of a round sphere, via the Brenier–McCann optimal transport map. This theorem extends Caffarelli’s contraction theorem to nearly spherical manifolds and yields, as a corollary, a proof of a perturbative form of Milman’s conjecture. The proof is based on a novel stability result for optimal transport maps on the sphere.
\end{abstract}

\section{Introduction and main results}

Caffarelli's contraction theorem \cite{caffarelli1} asserts that if $\mu = e^{-V(x)}dx$ is a probability measure on $\R^n$ satisfying $\nabla^2 V \geq I_n$, then the Brenier optimal transport map that pushes forward the Gaussian measure $(2\pi)^{-n/2} e^{-|x|^2/2} dx$ onto $\mu$ is a contraction. The original proof relies on a PDE-based analysis of the Monge–Ampère equation solved by the Brenier map, although a more recent approach via entropic regularization techniques has also been established \cite{caffarellientropy, chewi2023entropic}.
The uniform log-concavity assumption 
$$\nabla^2 V \geq I_n$$ 
falls within the broader framework of $CD(1,\infty)$ spaces, which, in a synthetic sense, correspond to metric measure spaces whose Ricci curvature satisfies $\Ric \geq 1$ without any restriction on the dimension. This naturally raises the question of whether Caffarelli’s contraction theorem remains valid in the setting of $CD(\rho,n)$ spaces, which synthetically correspond to spaces having a Ricci curvature bounded below by $\rho>0$ and a finite dimensional upper bound $n>1$.

Since the model space saturating the $CD(\rho,\infty)$ condition is, for all $n \geq 1$, the $n$-dimensional Gaussian with covariance matrix $\rho \cdot I_d$, Caffarelli’s theorem can be interpreted as stating that the optimal transport map sending the standard $n$-dimensional Gaussian, which serves as the model density for the $CD(1,\infty)$ condition, onto a smooth $CD(1,\infty)$ density in $\R^n$ is a contraction.

From this viewpoint, a natural question arises: does the optimal transport map pushing forward the uniform measure on the sphere of dimension $n$ and radius $(n-1)/\rho$, which is the model space achieving equality in the $CD(\rho,n)$ condition, onto a smooth $CD(\rho,n)$ density, remain a contraction? To make the optimal transport problem meaningful, it is reasonable to require that this density be supported on the same sphere as the model one. However, several well-known obstructions occur in this setting. For instance, any contractive function $f : (\Sph^n, g_{can}) \to (\Sph^n, g_{can})$ is either supported within the interior of a hemisphere or is an isometry; see, for example, \cite[Proposition 1.41]{brudnyi2}. One may attempt to replace the uniform distribution on the entire sphere by the uniform distribution on a hemisphere \cite[Section 4]{BeckJerison}, but Caffarelli’s theorem still fails to extend in this case; see \cite{someobstructions}.\\

Parallel to the research avenue aiming to extend Caffarelli’s contraction theorem to the setting of $CD(\rho,n)$ spaces lies a more classical geometric question: what are the consequences for a space whose curvature exceeds that of the sphere?
Assume that $(M^n, g)$ is an $n$-dimensional Riemannian manifold satisfying
\begin{equation}\label{eq:riccicompar}
\Ric_g \geq (n-1) g.
\end{equation}
In this case, the geometry of $(M^n,g)$ can be compared with that of the standard sphere $(\mathbb{S}^n, \g)$ of radius $1$, which realizes equality in \eqref{eq:riccicompar} and thus serves as the canonical model space.  
A number of fundamental comparison results follow from this curvature bound, including:
\begin{itemize}
\item (Myers \cite{Meyer}) The diameter is bounded by that of the unit sphere: $\diam(M,g) \leq \diam(\mathbb{S}^n,\g) = \pi$.
\item (Bishop–Gromov \cite{bishop1964relation}) For every $\rho > 0$, $x \in M^n$, and $x' \in \mathbb{S}^n$,
\[
\vol_g\big(B(x,\rho)\big) \geq r\, \vol_{\g}\big(B(x',\rho)\big),
\]
where the volume ratio is given by $r = \vol_g(M^n) / \vol_{\g}(\mathbb{S}^n)$.
\item(Lichnerowicz \cite{Lichnerowicz1958GomtrieDG}) The first non-zero eigenvalue of the Laplace–Beltrami operator satisfies $\lambda_1(M^n,g) \geq \lambda_1(\mathbb{S}^n,\g) = n$.
\end{itemize}
Further consequences include sharp isoperimetric inequalities à la Lévy–Gromov, Gaussian-type heat kernel bounds, and logarithmic Sobolev inequalities.

Milman \cite{Milman2018} observed that these classical inequalities can be unified under a single geometric principle: the existence of a measure-preserving contraction between the model sphere and the given manifold. More precisely, if there exists a $1$-Lipschitz map
\[
T : (\mathbb{S}^n, \g, \vol_{\g}) \to (M^n, g, \vol_g)
\]
such that $T_\#(r\, \vol_{\g}) = \vol_g$, where
\[
r = \frac{\vol_g(M^n)}{\vol_{\g}(\mathbb{S}^n)},
\]
then Myers’ diameter estimate follows immediately from the contractivity of $T$, while the Lichnerowicz bound can be deduced from the variational characterization of $\lambda_1$. 

Motivated by this observation, Milman proposed the following conjecture, restricted to manifolds diffeomorphic to the sphere to exclude obvious topological obstructions to the existence of a Lipschitz contraction.
\begin{conj}{\cite[Conjecture 4]{Milman2018}}\label{conj:milman}
Let $(\mathbb{S}^n, g, \vol_g)$ be a Riemannian manifold with $\Ric_g \geq (n-1) g$. Then there exists a map
\[
T : (\mathbb{S}^n, \g, \vol_{\g}) \to (\mathbb{S}^n, g, \vol_g)
\]
from the unit round sphere $(\mathbb{S}^n, \g)$ that pushes forward $\vol_{\g}$ onto $\vol_g$ up to a constant factor, and which is contractive with respect to the corresponding Riemannian metrics.
\end{conj}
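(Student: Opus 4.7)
The plan is to define $T$ as the Brenier–McCann optimal transport map on the sphere for the quadratic cost $c(x,y) = \dist_{\g}(x,y)^2/2$, pushing forward the normalized uniform measure $r\,\vol_{\g}$ onto $\vol_g$, where $r = \vol_g(\Sph^n)/\vol_{\g}(\Sph^n)$. By McCann's theorem, such a $T$ exists, is unique, and takes the form $T(x) = \Exp_x(-\nabla \varphi(x))$ for some $c$-concave potential $\varphi : \Sph^n \to \R$. Loeper's regularity theory, which exploits the positivity of the Ma–Trudinger–Wang tensor on the round sphere, then provides smoothness of $T$ away from a negligible focal set, so that the heart of the problem becomes a pointwise Lipschitz estimate $\|dT(x)\|_{\mathrm{op}} \leq 1$.

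Next, I would derive the Monge–Ampère-type equation satisfied by $\varphi$: the Jacobian $J_T$ of $T$ (measured against $g$ on the target and $\g$ on the source) equals the density ratio $r \, d\vol_{\g}/d\vol_g \circ T$. Taking logarithms yields an equation of the form $\log\det\bigl( \mathrm{d}\Exp_x(-\nabla\varphi)\circ(I - \nabla^2\varphi) \bigr) = F(x, \varphi, T)$, where $F$ absorbs the log-density of $g$ relative to $\g$. Differentiating this identity twice in a fixed direction $v$ and combining with the Bochner formula on both the source and the target should produce a second-order differential inequality for the top eigenvalue of the matrix $dT^* g\, dT$, in which the Ricci hypothesis $\Ric_g \geq (n-1)g$ appears as the zeroth-order obstruction term.

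The core of the argument would then run a maximum principle on $\Lambda(x) := \lambda_{\max}(dT^* g\, dT)(x)$. At an interior maximum $x_0$ the first derivatives vanish and the elliptic second-derivative information is non-positive; the aim is that the curvature terms combine with the zeroth-order coefficient coming from the Monge–Ampère equation and force $\Lambda(x_0) \leq 1$. The positive MTW tensor of the round source would supply, in this curved setting, the concavity that plays the role of the convexity of the quadratic cost in Caffarelli's Euclidean argument.

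The main obstacle, and the reason Conjecture \ref{conj:milman} remains open in full generality, is twofold. First, on the sphere the optimal map can transport mass across the cut locus of the source, and neither Loeper's regularity theory nor the maximum principle sketched above behaves well across this stratum; controlling the pointwise Lipschitz norm through a possibly singular set for a generic target density appears to require fundamentally new tools. Second, the hypothesis $\Ric_g \geq (n-1)g$ is only a trace-type control on the target curvature, whereas the Bochner-type identity that governs $\Lambda$ involves the \emph{full} sectional curvature tensor of $g$; for metrics far from the round one, the missing non-trace terms can carry the wrong sign, and no known device absorbs them using only the Ricci lower bound. It is precisely these two gaps that the present paper sidesteps by restricting to the perturbative regime, where one can linearize around the round metric and invoke a stability theorem to bypass both obstructions.
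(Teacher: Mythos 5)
The statement you were assigned is a \emph{conjecture}: the paper does not prove it, and neither does your proposal. What you have written is a sketch of a Caffarelli-style strategy (Monge--Amp\`ere equation for the $c$-convex potential, Bochner identity, maximum principle on $\lambda_{\max}(dT^{*}g\,dT)$) followed by an explicit admission that the two key steps --- control across the cut locus and absorbing the non-trace part of the target curvature using only $\Ric_g\ge (n-1)g$ --- cannot currently be carried out. That is an honest assessment, but it means there is no proof here, only a plan with acknowledged gaps; as a proof attempt it therefore fails at exactly the points you name. Two further inaccuracies in the framing are worth flagging: the conjecture asks only for \emph{some} measure-preserving contraction, so committing from the start to the Brenier--McCann map is a strictly stronger goal than required (the paper emphasizes this distinction); and in the regime the paper actually treats, the cut-locus difficulty is not the binding obstruction, since the Delano\"e--Loeper estimate (Lemma 4.2 in the paper) keeps the optimal map uniformly away from the cut locus as soon as the densities are bounded above and below.

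It is also worth recording how the paper's perturbative result (Corollary 1.3, via Theorem 1.2) is obtained, because it is methodologically quite different from your sketch: no Bochner formula and no maximum principle on the Lipschitz constant appear anywhere. Instead, the paper first proves a purely qualitative stability theorem (Theorem 1.4) by compactness and contradiction: uniform $\C^{2,\alpha}$ bounds on the potentials (Loeper, Liu--Trudinger--Wang, after the stay-away-from-cut-locus estimate), Arzel\`a--Ascoli, and McCann's uniqueness of the optimal map identify the limit as the identity. Then, for a metric $g$ that is $\C^{0,\alpha}$-close to the round metric $g_\rho$ of radius $\rho<1$, the optimal map $T$ pushing $\vol_{g_{can}}$ onto the normalized $\vol_g$ is $\C^{1,\alpha'}$-close to the dilation $S=\rho\,Id$, whose operator norm is $\rho<1$; the strict gap $1-\rho$, guaranteed by the Bishop--Gromov volume deficit ($\rho=(\vol_g(\Sph^n)/\vol_{g_{can}}(\Sph^n))^{1/n}<1$ when $g\neq g_{can}$), absorbs the perturbation and yields $1$-Lipschitzness. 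In other words, the contraction in the perturbative regime comes from a soft stability estimate plus a quantitative volume gap, not from curvature-driven concavity at an interior maximum --- precisely the device your sketch is missing and cannot supply in the general case.
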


It is worth emphasizing that Milman’s conjecture merely requires the existence of \emph{some} contractive transport map, without imposing that it coincide with the optimal transport map. In this sense, the conjecture can be viewed as a weakened variant of the problem of extending Caffarelli’s contraction theorem to the framework of $CD(\rho,n)$ spaces for general values of $\rho$ and $n$.\\

Very recently, a perturbative version of Milman’s conjecture was established in dimension $2$ \cite{serres25}, meaning that the conjecture was verified for nearly spherical surfaces. The contractive transport map constructed in that work was the Kim–Milman map induced by the Ricci flow. To the best of the authors’ knowledge, this represents the first positive evidence toward a general proof of Milman’s conjecture. In the present work, we extend this result to arbitrary dimensions by using the Brenier–McCann optimal transport map in place of the Kim–Milman map. In particular, our main result, stated below, contributes to the ongoing development of extensions of Caffarelli’s contraction theorem to the setting of $CD$-spaces.

\begin{theorem}\label{thm:main}
Let $\alpha\in(0,1)$, let $\Sph^n$ be the standard differentiable $n$-sphere, let $g_\rho$ be the metric of the round sphere of radius $\rho\in(0,1)$, and let $g_{can}$ be the canonical metric of the round sphere of radius $1$. There exists $\varepsilon>0$ only depending on the dimension $n\geq 1$, the radius $\rho\in(0,1)$, and $\alpha\in (0,1)$, such that for all metric $g$ such that 
\begin{equation}\label{eq:nearlyspherical}
\left|\left| g - g_\rho \right| \right|_{\C^{0,\alpha}(\Sph^n)} \leq \varepsilon 
\end{equation}
the optimal transport map $T: \Sph^n\to\Sph^n$ for the quadratic cost $d_{g_{can}}^2$, seen as a map between the following manifolds $T : (\Sph^n,g_{can}) \to (\Sph^n,g)$, transporting $\vol_{g_{can}}$ onto $\frac{\vol_{g_{can}}(\Sph^n)}{\vol_{g}(\Sph^n)}\vol_g$, is 1-Lipschitz. 
\end{theorem}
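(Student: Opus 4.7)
The plan is to realize $T$ as a small perturbation of the identity, exploiting the fact that the identity is the optimal transport map in the unperturbed case $g=g_\rho$, together with a built-in Lipschitz margin of $1-\rho>0$.

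First I would settle the baseline $g=g_\rho=\rho^2\g$. Here $\vol_{g_\rho}=\rho^n\vol_{\g}$, so the normalized target measure $\frac{\vol_{\g}(\Sph^n)}{\vol_{g_\rho}(\Sph^n)}\vol_{g_\rho}$ coincides with $\vol_{\g}$. By rotational symmetry and uniqueness of the Brenier--McCann transport for the cost $d_{\g}^2$, the optimal map from $\vol_{\g}$ to itself is the identity. Viewed as a map $(\Sph^n,\g)\to(\Sph^n,g_\rho)$, the identity is exactly $\rho$-Lipschitz, with quantitative room to spare. For general $g$ near $g_\rho$, I would then write $T(x)=\Exp_x(-\nabla\varphi(x))$ for a $c$-concave potential $\varphi$ associated with $c=d_{\g}^2/2$, so that $\varphi$ satisfies a Monge--Amp\`ere equation on $(\Sph^n,\g)$ whose right-hand side $f_g$ is essentially $\sqrt{\det_{\g}g}$ up to the normalization constant. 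Since $g$ is $\C^{0,\alpha}$-close to $g_\rho$, the density $f_g$ is $\C^{0,\alpha}$-close to the constant $1$ corresponding to the baseline.

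The heart of the argument, and the main obstacle, is a quantitative stability statement of the form: there is a modulus $\omega$ with $\omega(0)=0$ such that, for the mean-zero Brenier potential, $\|\varphi\|_{\C^{2,\alpha}}\leq\omega(\|f_g-1\|_{\C^{0,\alpha}})$. The a priori $\C^{2,\alpha}$ bound should follow from the regularity theory of Loeper and Figalli--Kim--McCann, since the squared distance on the round sphere satisfies the Ma--Trudinger--Wang condition and $f_g$ is bounded above and bounded away from zero. To upgrade these a priori bounds into genuine stability with an explicit modulus I would run a compactness-and-contradiction scheme: a hypothetical sequence violating stability would yield, after normalization and extraction, a nonconstant $c$-concave limit potential transporting $\vol_{\g}$ to itself, contradicting uniqueness of the baseline transport. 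The delicate issues are handling the cut locus of $\Sph^n$ (so that $-\nabla\varphi$ stays in the domain of $c$-convexity and MTW regularity applies uniformly along the sequence) and ensuring that the convergence is strong enough to produce genuine $\C^{2,\alpha}$-closeness rather than merely weak convergence.

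Once $\|\varphi\|_{\C^{2,\alpha}}$ is small, the map $T=\Exp(-\nabla\varphi)$ is $\C^{1,\alpha}$-close to the identity on $\Sph^n$, and its differential $dT(x)$ is uniformly close to the identity map of tangent spaces. To bound the Lipschitz constant of $T:(\Sph^n,\g)\to(\Sph^n,g)$, I would estimate the pointwise operator norm $|dT(x)|_{\g\to g}$ by first comparing $g$ at $T(x)$ with $g_\rho=\rho^2\g$ at $T(x)$: this produces a factor $\rho+O(\|g-g_\rho\|_{\C^0})$, which combined with $dT\approx I$ gives an operator norm essentially equal to $\rho$. For $\varepsilon$ small this operator norm stays uniformly below $1$, and since $T$ is $\C^1$, this pointwise estimate upgrades to the global Lipschitz bound, completing the proof.
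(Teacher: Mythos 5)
Your proposal follows essentially the same route as the paper: establish that the baseline map for $g=g_\rho$ is the identity (a $\rho$-Lipschitz map with margin $1-\rho$), prove a stability result for the spherical optimal transport map via MTW/Loeper--Figalli--Kim--McCann a priori estimates combined with a compactness-and-contradiction argument, and then close the gap using the built-in margin. The paper packages the stability step as a standalone result (Theorem~\ref{thm:stability}); one small discrepancy worth flagging is that your target estimate $\|\varphi\|_{\C^{2,\alpha}}\leq\omega(\|f_g-1\|_{\C^{0,\alpha}})$, i.e.\ $\C^{1,\alpha}$-closeness of $T$ to the identity, is slightly stronger than what the compactness scheme actually yields---Arzel\`a--Ascoli only gives $\C^{1,\alpha'}$-convergence for $\alpha'<\alpha$, which is exactly what Theorem~\ref{thm:stability} states. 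You anticipate this issue, and since the final Lipschitz estimate only needs $\C^{1}$-closeness, it is harmless.
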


We deduce the following corollary, which corresponds to a proof of Milman's conjecture restricted to nearly spherical manifolds.

\begin{corollary}\label{corol:Milmanperturbatif}
Let $(\Sph^n,g)$ be such that $\Ric\geq (n-1)g$ and assume that $g$ is different from the canonical metric $g_{can}$ of the round sphere of radius $1$. Then there exists $\varepsilon>0$ only depending on the dimension $n$, the volume of $(\Sph^n,g)$ and $\alpha\in (0,1)$, such that if 
$$\left|\left| g - g_\rho \right| \right|_{\C^{0,\alpha}(\Sph^n)} \leq \varepsilon $$
where $(\Sph^n,g_{\rho})$ denotes the round sphere of radius $\rho$, $$\rho = \left(\frac{\vol_g(\Sph^n)}{\vol_{g_{can}}(\Sph^n)}\right)^{1/n} < 1,$$
then it exists a contractive transport map $T : (\Sph^n,g_{can}) \to (\Sph^n,g)$, \textit{i.e.} $T$ is 1-Lipschitz and $$ T_\#\vol_{g_{can}} = \frac{\vol_{g_{can}}(\Sph^n)}{\vol_{g}(\Sph^n)}\vol_g. $$
\end{corollary}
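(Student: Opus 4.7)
The plan is to show that the corollary follows almost immediately from Theorem~\ref{thm:main}, the only nontrivial point being to verify that the volume-ratio parameter $\rho$ satisfies $\rho\in(0,1)$, so that Theorem~\ref{thm:main} is actually applicable.

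First I would establish that $\rho<1$. Under the hypothesis $\Ric_g\geq (n-1)g$, Bishop--Gromov volume comparison gives
\[
\vol_g(\Sph^n)\leq \vol_{g_{can}}(\Sph^n),
\]
and the classical rigidity statement (equality case of Bishop--Gromov, equivalently Cheng's maximal diameter theorem) asserts that equality forces $(\Sph^n,g)$ to be isometric to the unit round sphere. Since the assumption that $g$ differs from $g_{can}$ must be read, consistently with the geometric content of the statement, as $g$ being not isometric to $g_{can}$, the inequality is strict and
\[
\rho = \left(\frac{\vol_g(\Sph^n)}{\vol_{g_{can}}(\Sph^n)}\right)^{1/n}\in(0,1).
\]

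Next I would apply Theorem~\ref{thm:main} with this value of $\rho$. The parameter $\rho$ is determined by $n$ and $\vol_g(\Sph^n)$ alone, so the threshold $\varepsilon_0(n,\rho,\alpha)$ produced by Theorem~\ref{thm:main} can equivalently be seen as depending on $n$, $\vol_g(\Sph^n)$ and $\alpha$. Setting $\varepsilon:=\varepsilon_0$, the hypothesis $\|g-g_\rho\|_{\C^{0,\alpha}(\Sph^n)}\leq\varepsilon$ of the corollary is exactly the hypothesis of Theorem~\ref{thm:main}, which then provides the Brenier--McCann optimal transport map $T:(\Sph^n,g_{can})\to(\Sph^n,g)$ for the quadratic cost $d_{g_{can}}^2$, pushing $\vol_{g_{can}}$ onto $\frac{\vol_{g_{can}}(\Sph^n)}{\vol_g(\Sph^n)}\vol_g$, and being 1-Lipschitz. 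This is exactly the contractive mass-preserving transport map claimed.

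The main (and only) obstacle is the strict inequality $\rho<1$: without it, Theorem~\ref{thm:main} does not apply, and one cannot even formulate the target sphere $g_\rho$ meaningfully. This obstacle is handled by a purely classical rigidity argument in comparison geometry, so the corollary is in essence a reformulation of Theorem~\ref{thm:main} tailored to match the original formulation of Milman's conjecture.
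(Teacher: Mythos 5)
Your proof is correct and matches the paper's implicit reasoning: the paper supplies no separate proof, only a remark that spells out exactly the two ingredients you use (Bishop--Gromov comparison with its rigidity case to force $\rho<1$, then an application of Theorem~\ref{thm:main} since $\rho$ is determined by $n$ and $\vol_g(\Sph^n)$). Your aside that ``$g$ different from $g_{can}$'' must be read as ``not isometric to $g_{can}$'' correctly identifies, and tidies up, a slight imprecision in the paper's own statement.
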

\begin{remark}
Note that under the assumption $\Ric\geq (n-1)g$, it follows from Gromov-Bishop comparison theorem that $\vol_g(\Sph^n) \le \vol_{g_{can}}(\Sph^n)$  and the equality holds if, and only if, $g=g_{can}$ \cite[section 7.1.2]{Petersen}. And it is false that all small perturbations of the sphere of radius $1$ are contractive, measure preserving image of $(\Sph^n,g_{can})$. This is why we exclude this case in Corollary \ref{corol:Milmanperturbatif}. Also, our perturbation result does not require $\Ric\geq (n-1)g$ pointwise since the smallness is in $\C^{0,\alpha}$ topology.
\end{remark}

The proof is based on the following stability result for the optimal transport problem, which appears to be interesting by itself.

\begin{theorem}\label{thm:stability}
Let $(\Sph^n,g_{can})$ be the round sphere of radius $1$, and let us denote by $\vol_{can}$ its associated volume measure. Let $\alpha\in (0,1)$ and let $f_1 : \Sph^n\to \R_+$ be a $\C^{0,\alpha}$ function such that $\int_{\Sph^n} f_1\,d\vol_{can} = \vol_{can}(\Sph^n)$, and let us denote by
$\mu_1 = f_1 d\vol_{can}$ the induced positive measure (with same volume as $\vol_{can}$). Let $T : \Sph^n\to\Sph^n$  be the optimal transport mapping, with respect to the quadratic cost induced by the metric $g_{can}$, pushing forward $\vol_{can}$ onto $\mu_1$.  Then the following holds:for all $\alpha'\in (0,\alpha)$ and for all small $\delta>0$, 
there exists some $\varepsilon>0$ depending on the dimension $n$, $\alpha$ and $\alpha'$, such that if
$$\left|\left| f_1 - 1 \right|\right|_{\C^{0,\alpha}(\Sph^n)} \leq \varepsilon, $$ then it holds that
$$\left|\left| T_1 - Id \right|\right|_{\C^{1,\alpha'}(\Sph^n)} \leq \delta. $$
\end{theorem}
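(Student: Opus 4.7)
The strategy is to combine the Monge--Ampère formulation of the optimal transport problem with a compactness/contradiction argument, exploiting the fact that the round sphere is the flagship example of a Riemannian manifold satisfying the Ma--Trudinger--Wang condition. The Brenier--McCann theorem yields a $c$-concave potential $\varphi : \Sph^n \to \R$ with $T(x) = \exp_x(\nabla\varphi(x))$, and the mass balance $T_\# \vol_{can} = f_1 \vol_{can}$ translates into a Monge--Ampère-type equation
\[
\det\bigl( D^2 \varphi(x) + A(x,\nabla\varphi(x)) \bigr) \;=\; f_1(T(x))\, J(x,\nabla\varphi(x)),
\]
where $A$ and $J$ are smooth off the cut locus. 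For $f_1 \equiv 1$ the unique (mean-zero) solution is $\varphi \equiv 0$, giving $T = Id$.

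The first step is a soft uniform stability: if $\| f_1 - 1 \|_{\C^0} \to 0$, then $W_2(\mu_1,\vol_{can}) \to 0$, and by absolute continuity of the source measure together with stability of $c$-concave potentials, the associated optimal maps converge uniformly to the identity. In particular, for $\varepsilon$ small enough, one ensures that the transport displacement $d_{g_{can}}(x, T(x))$ remains strictly below the injectivity radius, which removes the singularity of the cost function at the cut locus and localizes the problem in the smooth region of $c = d_{g_{can}}^2/2$.

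The second step is to invoke the Caffarelli--Loeper--Ma--Trudinger--Wang regularity theory. Since $(\Sph^n, g_{can})$ satisfies the MTW condition (Loeper) and the previous step places the problem in the smooth regime of the cost, the uniform $\C^{0,\alpha}$-bound on $f_1$ promotes to a uniform $\C^{2,\alpha}$-bound on $\varphi$, hence a uniform $\C^{1,\alpha}$-bound on $T$, with constants depending only on $n$, $\alpha$ and $\|f_1\|_{\C^{0,\alpha}}$. The final step is then a compactness argument: if the conclusion failed, there would be a sequence $f_k$ with $\|f_k - 1\|_{\C^{0,\alpha}} \to 0$ and $\|T_k - Id\|_{\C^{1,\alpha'}} \geq \delta$; by the uniform $\C^{1,\alpha}$ bound and the compact embedding $\C^{1,\alpha} \hookrightarrow \C^{1,\alpha'}$ on the compact manifold $\Sph^n$ (valid since $\alpha' < \alpha$), a subsequence converges in $\C^{1,\alpha'}$ to a limit $T_\infty$, which must be the unique optimal transport map from $\vol_{can}$ to itself, namely $Id$, contradicting the lower bound.

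The main obstacle is the uniform $\C^{2,\alpha}$ a priori estimate: while the MTW condition on the sphere is classical, one must verify that the regularity constants remain uniform along the $\C^{0,\alpha}$-neighborhood of $f_1 \equiv 1$, and that the quantitative separation from the cut locus achieved in the soft stability step is strong enough to let one genuinely reduce to the smooth MTW framework rather than only formally. The slight loss from $\alpha$ to $\alpha'$ in the conclusion is precisely the price paid by passing through compactness instead of a direct inverse function theorem argument.
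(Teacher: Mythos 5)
Your proposal is correct and follows essentially the same architecture as the paper: a contradiction/compactness argument built on the uniform a priori $\C^{2,\alpha}$ regularity of the potential (Loeper, Liu--Trudinger--Wang), the compact embedding $\C^{1,\alpha}\hookrightarrow\C^{1,\alpha'}$, and McCann's uniqueness theorem to identify the limit with $Id$. The one technical difference is in how you keep the transport away from the cut locus: you argue softly via $W_2$-stability of $c$-concave potentials, which only kicks in for $\varepsilon$ small, whereas the paper uses the explicit Delano\"e--Loeper gradient estimate (Lemma~\ref{Awaycut}), which gives $d(x,T(x)) \le \pi - \delta$ with $\delta$ depending only on $\inf f_1/\sup f_1$ and $n$; this makes the MTW regularity constant genuinely uniform across the whole family without any preliminary limit argument. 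You also gloss over the verification that the $\C^{1,\alpha'}$-limit actually pushes $\vol_{can}$ forward to $\vol_{can}$ and is optimal (so that uniqueness applies); the paper handles this by passing to the limit in the pushforward identity and in the $c$-convexity of the potentials, citing Delano\"e for the latter.
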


If we assume more regularity of density of positive probability, we get better stability result as follows.

\begin{theorem}\label{thm:stabilitybis}
Let $(\Sph^n,g_{can})$ be the round sphere of radius $1$, and let us denote by $\vol_{can}$ its associated volume measure. Let $\alpha\in (0,1)$ and let $f_1, f_2 : \Sph^n\to \R_+$ be two $\C^{0,\alpha}$ functions such that $\int_{\Sph^n} f_1\,\vol_{can} = \int_{\Sph^n} f_2\,d\vol_{can} = \vol_{can}(\Sph^n)$, and let us denote by
$\mu_1 = f_1 d\vol_{can}$ and $\mu_2 = f_2d\vol_{can}$ the two induced positive measures (with same volume as $\vol_{can}$). Let $T_1 : \Sph^n\to\Sph^n$ (respectively $T_2 : \Sph^n\to\Sph^n$) be the optimal transport mapping, with respect to the quadratic cost induced by the metric $g_{can}$, pushing forward $\vol_{can}$ onto $\mu_1$ (resp. $\vol_{can}$ onto $\,\mu_2$). We suppose $f_1$ is fixed. Then the following holds: for all $\alpha'\in (0,\alpha)$ and $k\ge 0$, for all $\delta>0$, there exists some $\varepsilon>0$ depending on the dimension $n$, $k$, $f_1$ $\alpha'$ and $\alpha$, such that if
$$\left|\left| f_1 - f_2 \right|\right|_{\C^{k,\alpha}(\Sph^n)} \leq \varepsilon, $$ then it holds that
$$\left|\left| T_1 - T_2 \right|\right|_{\C^{k+1,\alpha'}(\Sph^n)} \leq \delta. $$
\end{theorem}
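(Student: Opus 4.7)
The plan is to first establish a $\C^{1,\alpha'}$ base stability estimate and then bootstrap to $\C^{k+1,\alpha'}$ by elliptic regularity applied to the linearization of the Monge--Amp\`ere equation on $\Sph^n$ around the fixed Kantorovich potential $\phi_1$ with $T_1 = \Exp(\nabla \phi_1)$. Under the (implicit) assumption that $f_1$ is bounded below by a positive constant, the regularity theory of Loeper, Delano\"e and Ma--Trudinger--Wang for optimal transport on manifolds satisfying the MTW$(A3)$-condition (which $\Sph^n$ does) yields that $\phi_1$ is of class $\C^{k+2,\alpha}$, so that $T_1$ is a uniformly $c$-monotone $\C^{k+1,\alpha}$-diffeomorphism of $\Sph^n$. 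The base case $k=0$, namely $\|T_1 - T_2\|_{\C^{1,\alpha'}} \leq \delta$ whenever $\|f_1 - f_2\|_{\C^{0,\alpha}} \leq \varepsilon$, is a direct analogue of Theorem \ref{thm:stability} centered at the arbitrary background $T_1$ rather than at the identity, and I would obtain it by the same scheme: combining Loeper's uniform $\C^{1,\alpha}$ estimates for OT maps associated with positive H\"older densities bounded above and below with a compactness-and-uniqueness argument (weak stability of OT plus Arzel\`a--Ascoli).

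\textbf{Linearization and Schauder bootstrap.} The Kantorovich potentials $\phi_i$ satisfy a Monge--Amp\`ere-type equation of the schematic form
$$\det\!\bigl(\text{Hess}\,\phi_i(x) - A(x,\nabla\phi_i(x))\bigr) = h(x,\nabla\phi_i(x))\cdot\frac{1}{f_i(\Exp_x(\nabla\phi_i(x)))},$$
where $A$ and $h$ are smooth off the cut locus and encode the spherical geometry. Subtracting the equations for $\phi_1$ and $\phi_2$ and Taylor-expanding around $\phi_1$, the difference $\psi := \phi_2 - \phi_1$ satisfies a linear equation
$$L_{\phi_1}\psi = c_1(x)\,(f_1 - f_2)\circ T_1(x) + \mathcal{R}[\psi],$$
where $L_{\phi_1}$ is a uniformly elliptic second-order operator with $\C^{k,\alpha}$ coefficients (determined by $\phi_1$, hence by $f_1$) and $\mathcal{R}[\psi]$ is a nonlinear remainder of size $O(\|\psi\|_{\C^2}^2)$ with smooth coefficients. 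Since the base case gives $\C^{2,\alpha'}$-smallness of $\psi$ (translating the $\C^{1,\alpha'}$-closeness of $T_i = \Exp(\nabla\phi_i)$ into $\C^{2,\alpha'}$-closeness of $\phi_i$ modulo an additive constant), iterated Schauder estimates for $L_{\phi_1}$ applied to this equation yield
$$\|\psi\|_{\C^{k+2,\alpha'}} \leq C\bigl(\|f_1 - f_2\|_{\C^{k,\alpha}} + \|\psi\|_{\C^{2,\alpha'}}^2\bigr).$$
For $\varepsilon$ small enough, the quadratic term is absorbed by the left-hand side, giving $\|\psi\|_{\C^{k+2,\alpha'}} \leq \delta$ and thus the desired $\C^{k+1,\alpha'}$-closeness of $T_1$ and $T_2$.

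\textbf{Main obstacle.} The principal technical difficulty is the geometry of the Monge--Amp\`ere operator on the sphere: the coefficients $A(x,\nabla\phi)$ and $h(x,\nabla\phi)$ degenerate as $|\nabla\phi(x)| \to \pi$, at the cut locus. The linearization $L_{\phi_1}$ is well-defined and uniformly elliptic only as long as $|\nabla\phi_1|$ is bounded away from $\pi$ (which holds because $T_1$ is a smooth diffeomorphism by Step~1) and $|\nabla\phi_2|$ stays close to $|\nabla\phi_1|$, which is guaranteed by the base $\C^{1,\alpha'}$ stability. A secondary but crucial point is verifying that the Schauder constants depend only on $n$, $k$, $\alpha$, $\alpha'$, and $f_1$ --- not on $f_2$ --- which follows from the fact that $L_{\phi_1}$ is entirely determined by $\phi_1$ (hence by $f_1$), and that the ellipticity constants and H\"older norms of its coefficients are controlled by the $\C^{k,\alpha}$-norm of $f_1$ and its positive lower bound.
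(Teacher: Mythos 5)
Your plan is essentially the paper's own proof, and in particular coincides with the argument the paper describes as its \emph{alternative} route. The $k=0$ base case is obtained by the very same compactness-and-uniqueness scheme used to prove Theorem~\ref{thm:stability} (Loeper/Liu--Trudinger--Wang uniform $C^{1,\beta}$ and $C^{2,\alpha}$ estimates for positive H\"older densities bounded above and below, Arzel\`a--Ascoli, weak stability of optimal plans, and McCann's uniqueness), with the fixed $T_1$ playing the role that $\mathrm{Id}$ plays there; the higher-order case is then obtained by differentiating (linearizing) the Monge--Amp\`ere equation about $\phi_1$ and bootstrapping via Schauder theory for the resulting uniformly elliptic operator. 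The paper's primary route is slightly more abstract: it invokes Delano\"e's Lemma~4.1 asserting that the linearized Monge--Amp\`ere operator is an isomorphism $C^{k+2,\alpha}_0 \to C^{k,\alpha}_0$, and then deduces stability from the inverse function theorem; this packages the same linearization-plus-Schauder content that you spell out explicitly. One small technical correction to your bootstrap: since $\mathcal{R}[\psi]$ is built from (up to) second derivatives of $\psi$, its $C^{k,\alpha'}$-norm (which is what the Schauder estimate sees) is controlled by a product of the form $\|\psi\|_{C^{2,\alpha'}}\,\|\psi\|_{C^{k+2,\alpha'}}$ rather than by $\|\psi\|_{C^{2,\alpha'}}^2$; the absorption you perform then works exactly as intended, using the base-case smallness of $\|\psi\|_{C^{2,\alpha'}}$ to make the coefficient in front of $\|\psi\|_{C^{k+2,\alpha'}}$ less than one.
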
 

In what follows, Section \ref{sec:relatedliterature} provides a concise overview of the literature related to Caffarelli’s contraction theorem, the regularity theory for optimal transport maps, and their stability properties.
The subsequent two sections are devoted to the proofs of our main results.

\section{Related literature}\label{sec:relatedliterature}

A natural candidate for a map solving Milman's conjecture arises from the Kim–Milman construction \cite{kim2012generalization}, which provides a general method for building transport maps between two probability measures via a continuous flow interpolating between them. The Lipschitz (or contraction) property of the resulting map can then be investigated by analyzing how certain convexity features are preserved along the flow. Consequently, Conjecture~\ref{conj:milman} reduces to the task of identifying an appropriate flow sending the manifold $(\mathbb{S}^n, g)$ onto the round sphere $(\mathbb{S}^n, \g)$.

The two most natural candidates for such a flow are arguably the heat flow on the sphere and the Ricci flow. The heat flow approach was developed by Fathi, Mikulincer, and Shenfeld in \cite{FathiMikulincerShenfeld}, where they proved that the transport map induced between the uniform measure on the $n$-dimensional round sphere of radius $\sqrt{n-1}$ and an $L$-log-Lipschitz perturbation of it is $\exp(35(L+L^2))$-Lipschitz. The scaling by $\sqrt{n-1}$ ensures that this Lipschitz bound is independent of the dimension. However, since $L>0$, the constant $\exp(35(L+L^2))$ is strictly greater than one, and thus the result does not capture the desired contractive behavior.
The approach by means of the Ricci flow was pursued in \cite{serres25}, where Milman’s conjecture was verified for nearly spherical surfaces, thus establishing Theorem~\ref{thm:main} in dimension $2$. To the best of the authors’ knowledge, this provided the first positive evidence toward a general proof of Milman’s conjecture. In the present work, we extend this result to arbitrary dimensions by employing the Brenier–McCann optimal transport map in place of the Kim–Milman map generated by the Ricci flow.

We also mention the recent work of Carlier, Figalli, and Santambrogio \cite{carlier}, who extended Caffarelli’s contraction theorem to the setting of $1/d$-concave densities on $\R^d$, that is, densities of the form $V^{-d}$ with $V$ convex. This result represents a significant step toward extending Caffarelli’s theorem to the framework of $CD$-spaces, since such $1/d$-concave densities satisfy the curvature–dimension condition $CD(0,n)$ for a negative parameter $n$. We refer to \cite{othanegativedim, milmannegativedim} for a detailed discussion of the meaning of this criterion. In particular, since for any $n>1$ and $\rho\in\R$, the condition $CD(\rho,n)$ implies $CD(\rho,-n)$, their theorem may also be regarded as positive evidence supporting the development of Caffarelli-type contraction results in this extended setting.\\

The regularity of optimal transport mapping plays an important role in Optimization theory. It has interesting connections to Geometry and Physics, and other topics.  We consider the cost function equal to the half of geodesic distance square, that is, $c(x,y)=\frac12 d(x,y)^2$. In the flat case, due to Brenier \cite{brenier87, brenier91}, there exists a unique optimal transport mapping $T$ pushing forwards $f_1\, dx$ onto $f_2\, dx$. Moreover, we can find a convex potential function $u : \R^n\to \R$ such that $T=\nabla u$, and $u$ satisfies a Monge-Ampère type equation
$$
\det (\nabla^2 u)=\frac{f_1(x)}{f_2(\nabla u(x))}.
$$
On the flat tori, Cordero-Erausquin \cite{Dario99} shows the $C^{2,\alpha'}$ regularity of the potential $u$ provided that the positive density functions $f_1,f_2$ are $C^{2,\alpha}$. On Riemannian manifolds, McCann \cite{McCann01} generalizes the results due to Brenier: we can find a $c$-convex potential function $u : \mathcal{M}\to\R$ such that the unique optimal transport mapping $T : \mathcal{M} \to \mathcal{M}$ pushing forwards $f_1\, d\vol$ onto $f_2 \,d\vol$ is equal to 
\[
T(x)=\exp_x (\nabla u(x)),
\]
where $\exp_x$ denotes the Riemannian exponential map, and moreover $u$ 
satisfies a Monge-Ampère type equation
\begin{equation}\label{AMG}
\det (\nabla^2 u+\nabla^2 c(x, T(x)))=\frac{f_1(x)}{f_2(T(x))}\left|\det \nabla_x\nabla_y\, c(x, T(x))\right|
\end{equation}
In \cite{MaTrudingerWang05}, Ma, Trudinger and Wang introduce a new tensor, now called the MTW tensor after them, and show the interior $C^3$ regularity of the potential function $u$ under the condition that the MTW tensor is positive and that the probability densities $f_1, f_2$ are $C^2$. For the conveniences of the readers, we recall the definition of MTW when the cost function $c(x,y)=\frac12 d(x,y)^2$. Given two points $x,y$, we assume that $y$ is not in the cut-locus of the point $x$, that is, there is a unique minimizing geodesic between $x$ and $y$. Let $v= -\nabla_x c(x,y)$ be the tangent vector in $T_x M$. This means $\exp_x(v)=y$ where $\exp_x$ is the exponential map at the point $x$. Given two unit orthogonal tangent vectors $\xi,\nu\in T_xM$, the MTW tensor is defined (see \cite{MaTrudingerWang05,delanoe2015smoothness}) as follows
\begin{equation}
\label{MTW}
\begin{array}{ll}
{\cal C}{(x,y)}(\xi,\nu):=\left.\displaystyle\frac{d^2}{dt^2}\right|_{t=0}[-{\rm D}^2_{xx}c](x, {\rm exp}_{x}(v_0+t\nu))(\xi,\xi)\
\end{array}
\end{equation}
where $ D$ is the canonical flat connection in $T_{x}M$. Moreover, $M$ is said to be an $A3$ manifold or, equivalently we say that the $A3S$ condition is satisfied if there exists $\theta>0$ such that for all $x,y\in M$ such that $y$ is not in the cut-locus of $x$, and for all unit orthogonal tangent vector $\xi,\nu\in T_xM$, it holds
\begin{equation}
\label{A3S}
\begin{array}{ll}
{\cal C}{(x,y)}(\xi,\nu)\ge \theta.
\end{array}
\end{equation}
Later on, Kim and McCann \cite{KimMcCann10} give an intrinsic geometric interpretation of MTW tensor, by recasting the key positivity hypothesis \eqref{A3S}
of Ma, Trudinger and Wang as the non-negativity of
certain pseudo-Riemannian sectional curvatures of some pseudo-metric. In \cite{loeper11}, Loeper shows that this condition is satisfied on the standard spheres. The $C^{1,\alpha}$ regularity of the potential function $u$ is proved in \cite{loeper09,loeper11}, and Liu--Trudinger--Wang \cite{liu2009interior} show interior $C^{2,\alpha}$ regularity. For higher-order regularity, Delano\"e \cite{delanoe2015smoothness} proves the $C^{k+2,\alpha}$ regularity of the potential function for $k \ge 2$ (see also \cite{FigalliKimMcCann13b, ge2021regularity, KimMcCann12, liu2009interior}) when the probability densities $f_1, f_2$ are $C^{k,\alpha}$. There are two key estimates: on the one hand, the optimal transport map stays away from the cut locus; on the other hand, the $C^2$ \textit{a priori} estimate for the potential function $u$ are established by using the maximum principle. There are many extensions of the MTW tensor and regularity results for optimal transport maps, we refer the reader to \cite{delanoe2006gradient, DG10, DG11, Figalliloeper09, FigalliKimMcCann13, FigalliRifford09, FigalliRiffordVillani12, FigalliRiffordVillani12b, KimMcCann12, TrudingerWang09}.\\

Beyond the existence, uniqueness, and regularity of the optimal transport mapping $T_{\rho\to\mu}$, one may ask about its \emph{stability} with respect to variations of the marginal distributions $\rho$ and $\mu$. In the flat case, Brenier showed \cite{brenier91} that, for a fixed measure $\rho$, the following optimal transport functional
\begin{align*}
\left(\mathcal{P}_2(\R^d),\, W_2 \right) &\to L^2(\rho), \\
 \mu &\mapsto T_{\rho\to\mu},
\end{align*}
is continuous. This continuity property encodes a \emph{qualitative stability} of the optimal transport mapping. Recently, there has been growing interest in \emph{quantitative stability}, which provides effective bounds useful for numerical purposes and also leads to a better theoretical understanding of the phenomenon. The quantitative results obtained in the literature are typically of the form
\begin{equation}\label{eq:stabilityformgeneral}
\forall \mu_1,\mu_2\in \mathcal{P}_2,\quad \left\| T_{\rho\to\mu_1} - T_{\rho\to\mu_2}\right\|_{L^2(\rho)}  \leq C\, W_1(\mu_1,\mu_2)^{\alpha},
\end{equation}
for some constants $C,\alpha>0$. These quantitative bounds do not always hold; see \cite{Letrouitinstable}. However, several important results have recently been obtained under regularity assumptions on $\rho$; see Mérigot, Delalande, and Chazal \cite{merigotDelalandeChazal}, Gallouët, Mérigot, and Thibert \cite{GallouetMerigotThibert}, Delalande and Mérigot \cite{DelalandeMerigo23}, and Letrouit and Mérigot \cite{LetrouitMerigot24}. Quantitative stability results have also been established in the setting of Riemannian manifolds; see \cite{Ambrosio2019OptimalMap, kitagawa2025stability}. These stability estimates are closely related to the question of the time regularity of the optimal transport functional along curves in $\mathcal{P}_2$, a topic first investigated by Loeper \cite{loeper05stability} and subsequently developed by Gigli \cite{gigli}.
For a comprehensive overview of this rapidly developing field, we refer the reader to the Cours Peccot notes of Cyril Letrouit \cite{LetrouitPeccot}.
We also mention the very recent work \cite{chewistabilityKimMilman}, where an analogous quantitative stability result is established for the Kim–Milman transport map instead of the Brenier optimal transport map.

Our stability results (Theorems \ref{thm:stability} and \ref{thm:stabilitybis}) are not quantitative, as they rely on contradiction and compactness arguments. However, they hold in a stronger topology than those stated in the literature cited above, since they invoke the $\mathcal{C}^{3,\alpha}$ topology for the transport mapping instead of the $L^2$ norm, and the $\mathcal{C}^{2,\alpha}$ topology for the measure densities instead of the Wasserstein distance. We leave open the question of obtaining a quantitative bound in these topologies:
\[
\forall \mu_1,\mu_2\in \mathcal{P}_2,\quad \left\| T_{\rho\to\mu_1} - T_{\rho\to\mu_2}\right\|_{\mathcal{C}^{1,\alpha'}}  \leq C\,  \left\| f_{\mu_1} - f_{\mu_2}\right\|_{\mathcal{C}^{0,\alpha}},
\]
for some constants $C,\alpha>\alpha'>0$. Note, in particular, that such a bound may allow one to deduce how the $\varepsilon>0$ whose existence is guaranteed by Theorem \ref{thm:main} depends on the dimension.

\section{Proof of Theorem \ref{thm:main}}

Consider first the optimal transport map from $\vol_{g_{can}}$ onto $c\cdot\vol_{g_\rho}$, where $c=\vol_{g_{can}}(\Sph^n)/\vol_{g_{\rho}}(\Sph^n)$ stands for a normalizing constant and $g_\rho$ denotes the metric of the round sphere of radius $\rho$. Let us denote by $S : \Sph^n \to \Sph^n$ this transport map, which hence satisfies $S_\#\vol_{g_{can}} = c\cdot\vol_{g_\rho}$. It is easy to see that $S$ is equals to the composition $(\Sph^n, g_{can}) \hookrightarrow \R^{n+1} \to \R^{n+1}$, where $\hookrightarrow$ stands for the canonical embedding, and $\R^{n+1} \to \R^{n+1}$ is the dilation $x\mapsto \rho\,x$. As a consequence, by abuse of notation, we will often write that $S = \rho\,Id$.
On the other hand, we consider the optimal transport map from $\vol_{g_{can}}$ onto $c'\cdot\vol_{g}$, where $c'=\vol_{g_{can}}(\Sph^n)/\vol_{g}(\Sph^n)$ stands for a normalizing constant. Let us denote by $T : \Sph^n \to \Sph^n$ this transport map, which hence satisfies $T_\#\vol_{g_{can}} = c\cdot\vol_{g}$. 
Now, for a fixed $\delta>0$, by using Theorem \ref{thm:stability}, we get that if $\varepsilon$ is small enough in Condition \eqref{eq:nearlyspherical}, then it holds
\begin{equation}\label{eq:closenessOT}
\left|\left|S - T \right|\right|_{\C^{1,\alpha'}(\Sph^n)} \leq \delta
\end{equation}
So we can write
\begin{align*}
\left|\left| \nabla T \right|\right|_{g_{can}\to g} &\leq \left|\left| \nabla S - \nabla T \right|\right|_{g_{can}\to g} + \left|\left| \nabla S \right|\right|_{g_{can}\to g}
\end{align*}
where we denote by $\left|\left| \nabla T \right|\right|_{g_{can}\to g}$ the supremum norm of the Hilbert-Schmidt norm of the Jacobian of $S$, seen as a function between the following Riemannian manifolds $T : (\Sph^n, g_{can}) \to (\Sph^n, g) $.
We have that
\[
\left|\left| \nabla S - \nabla T \right|\right|_{g_{can}\to g}^2  = \sup_{x\in\Sph^n} \, g_{can}^{ij} g_{kl} \langle\nabla_i\left(S^k-T^k\right),  \nabla_j\left(S^l-T^l\right)\rangle
\]
where we use Einstein's summation convention. Therefore,
\[
\left|\left| \nabla S - \nabla T \right|\right|_{g_{can}\to g}^2  \leq \delta^2 \sup_{x\in\Sph^n} \, \sum_{i,j=1}^n g_{can}^{ii} \left[ (g_\rho)_{jj} + \varepsilon\right]   \leq K_\rho \delta^2
\]
for some constant $K_\rho$ only depending on $\rho$ and the dimension $n$. Moreover, since $S=\rho\,I_d$, it is clear that $\left|\left| \nabla S \right|\right|_{g_{can}\to g} \leq \rho $. As a consequence, it follows that
\[
\left|\left| \nabla T \right|\right|_{g_{can}\to g}   \leq \delta \sqrt{K_\rho} + \rho.
\]
And since $\rho<1$, it follows that we can choose $\delta$ such that $\delta \sqrt{K_\rho} < 1- \rho$, giving that the transport mapping $T$ is $1$-Lipschitz, which concludes the proof. $\qed$

\section{Proof of the stability Theorems \ref{thm:stability} and \ref{thm:stabilitybis}}

For ease of reading, we cite a few technical lemmas.
The first concerns the uniqueness result of the optimal map. 

\begin{lemma}{\cite[Theorem 8]{McCann01}} \label{LemmaMcCann} Let $(M,g)$ be a connected, compact Riemannian manifold, $C^3$ smooth and without boundary. Given two Borel probability measures $\mu,\nu $ on $(M, g)$ such that $\mu$ has a positive density function with respect to the volume element on $M$. Then there is a \emph{unique} optimal transport map pushing $\mu$ forwards to $\nu$ with respect to the quadratic cost.
\end{lemma}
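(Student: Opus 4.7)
The plan is to prove this uniqueness result via Kantorovich duality combined with the structure theorem for $c$-cyclically monotone sets on a Riemannian manifold, with cost $c(x,y) = \frac{1}{2} d_g(x,y)^2$. First I would invoke Kantorovich duality to obtain a $c$-convex potential $u : M \to \R$, determined $\mu$-almost everywhere up to a constant, such that every optimal transport plan is concentrated on the graph
\[
\Gamma_u \;=\; \bigl\{ (x,y) \in M \times M : u(x) + u^c(y) = c(x,y) \bigr\}.
\]
Since $M$ is compact and $c$ is continuous (in fact Lipschitz in each variable, with constant controlled by $\diam(M,g)$), $u$ may be chosen globally Lipschitz on $M$.

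The second step is to establish that $u$ is differentiable at $\vol$-almost every $x \in M$, and therefore at $\mu$-a.e.\ $x$ since $\mu$ is absolutely continuous with respect to $\vol$. This follows from the local semiconcavity inherited from the cost $c$, combined with a Rademacher-type theorem transferred to manifolds via local charts. At any differentiability point $x_0$ of $u$ and any partner $y_0$ with $(x_0, y_0) \in \Gamma_u$, the function $x \mapsto c(x, y_0) - u(x)$ attains a minimum at $x_0$, so the first-order condition reads $\nabla_x c(x_0, y_0) = \nabla u(x_0)$. Since for the quadratic cost one has $\nabla_x c(x_0, y_0) = -\exp_{x_0}^{-1}(y_0)$ whenever $y_0$ is not in the cut locus of $x_0$, this forces
\[
y_0 \;=\; \exp_{x_0}\bigl(-\nabla u(x_0)\bigr),
\]
which in particular is single-valued.

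Consequently, at $\mu$-a.e.\ $x$ the fiber of $\Gamma_u$ above $x$ reduces to the singleton $\{T(x)\}$ with $T(x) = \exp_x(-\nabla u(x))$; any optimal plan is therefore forced to be $(\mathrm{id}, T)_\# \mu$, which gives both the existence and the desired uniqueness. If two distinct optimal maps $T_1, T_2$ did exist, the averaged plan $\tfrac{1}{2}(\mathrm{id}, T_1)_\# \mu + \tfrac{1}{2}(\mathrm{id}, T_2)_\# \mu$ would still be optimal, hence supported in $\Gamma_{\tilde u}$ for some $c$-convex $\tilde u$, and the single-valuedness at $\mu$-a.e.\ $x$ would then impose $T_1 = T_2$ $\mu$-a.e.

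The main technical obstacle is the analysis around the cut locus. The cost $d_g^2/2$ is not everywhere smooth: it is only locally semiconcave, and $\nabla_x c(x,y)$ fails to be well-defined precisely when $y \in \mathrm{cut}(x)$. One must therefore prove (i) that $c$-convex functions inherit this local semiconcavity, ensuring Rademacher-type differentiability despite the lack of global smoothness of $c$, and (ii) that the set of $x$ whose partner $y_0$ lies in $\mathrm{cut}(x)$ is $\vol$-negligible. Point (ii) relies on the fact that the cut locus of each fixed point is a $\vol$-null set on a $C^3$ compact manifold, combined with a measurable-selection argument. Handling these cut-locus subtleties forms the core of McCann's contribution and is what distinguishes the Riemannian proof from Brenier's flat-space argument.
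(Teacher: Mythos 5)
The paper does not prove this lemma: it is cited verbatim from McCann \cite[Theorem 8]{McCann01}, so there is no internal argument to compare against. Your sketch correctly reconstructs the skeleton of McCann's proof (Kantorovich duality, a Lipschitz $c$-convex potential, a.e.\ differentiability of $u$, and the identification $T(x)=\exp_x(-\nabla u(x))$), so the overall route is the right one.

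There is, however, one place where your plan introduces a proof obligation that is both harder than what is needed and not actually how McCann closes the argument. You state, as a necessary step (ii), that one must show that the set of $x$ whose optimal partner $y_0$ lies in $\mathrm{cut}(x)$ is $\vol$-negligible, because your first-order identity $\nabla_x c(x_0,y_0)=-\exp_{x_0}^{-1}(y_0)$ is asserted only when $y_0\notin\mathrm{cut}(x_0)$. McCann's uniqueness proof sidesteps this entirely. The key lemma (Lemma 7 in \cite{McCann01}) is that, at any point $x_0$ where $u$ is differentiable, the relation $u(x)-u(x_0)\le c(x,y_0)-c(x_0,y_0)$ shows $\nabla u(x_0)$ is a supergradient of the semiconcave function $c(\cdot,y_0)$ at $x_0$, and every supergradient of $d(\cdot,y_0)^2/2$ at $x_0$ is of the form $-v$ where $v$ is the initial velocity of some minimizing geodesic from $x_0$ to $y_0$. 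Hence $\exp_{x_0}(-\nabla u(x_0))=y_0$ holds \emph{even if} $y_0\in\mathrm{cut}(x_0)$, and the uniqueness of $\nabla u(x_0)$ alone forces $y_0$ to be single-valued. The negligibility of cut-locus pairings is a genuine but separate fact (due to Cordero-Erausquin--McCann--Schmuckenschläger and others) that enters in regularity theory, not in the uniqueness proof. Relatedly, for Rademacher differentiability of $u$ at $\mu$-a.e.\ point, Lipschitzness of $u$ (which you already have from compactness of $M$ and Lipschitzness of $c$) suffices; semiconcavity of $u$ is true and gives the stronger Alexandrov theorem, but you do not need it to reach the conclusion.
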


If we consider the standard unit sphere $\Sph^n$ with the canonical metric, we have the following away from cut-locus estimate.
\begin{lemma}{\cite[Theorem 2]{delanoe2006gradient}} \label{Awaycut}
Let $\mu$ and $\nu$ be two Borel probability measures with positive densities with respect to the standard volume element: $\mu=\rho_1 dvol_{can}$ and $ \nu =\rho_2\, dvol_{can}$ on $\Sph^n$. 
Then the optimal transport map $T$ pushing forwards $\mu$ onto $\nu$ satisfies
$$
d(m, T(m))\le \pi -\frac{1}{2\pi}\left\{\frac{\inf\rho_1}{\sup \rho_2}\left[\frac{n\, \vol(\Sph^n)}{2\, \vol(\Sph^{n-1})}\right]^2\right\}^{1/n}
$$
\end{lemma}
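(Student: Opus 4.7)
The plan is to argue by contradiction, combining the $c$-cyclic monotonicity of the Brenier--McCann transport plan with measure preservation and an explicit volume comparison near the cut locus of $\Sph^n$. Suppose that some $m_0 \in \Sph^n$ satisfies $d(m_0, T(m_0)) > \pi - r_\ast$, where $r_\ast$ denotes the right-hand side of the asserted inequality. Set $r := \pi - d(m_0, T(m_0)) < r_\ast$ and $y_0 := T(m_0)$, so that $d(\bar m_0, y_0) = r$: the point $y_0$ lies close to the antipode of $m_0$.

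The first step uses the $c$-cyclic monotonicity of the transport plan. Applied to $(m_0, y_0)$ and an arbitrary pair $(m, T(m))$, it reads
\[
d(m_0, y_0)^2 + d(m, T(m))^2 \;\le\; d(m_0, T(m))^2 + d(m, y_0)^2.
\]
Combined with $d(m, y_0) \le (\pi - d(m, m_0)) + r$ (triangle through the antipode) and the trivial $d(m_0, T(m)) \le \pi$, straightforward algebra yields the key estimate
\[
d(m, T(m)) + d(m, m_0) \;\le\; \pi + r \qquad \text{for every } m \in \Sph^n.
\]
In particular, the ``highly displaced'' set $A_r := \{m : d(m, T(m)) \ge \pi - r\}$ satisfies $\diam(A_r) \le 2r$, so $A_r \subset B(m_0, 2r)$.

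The second step combines measure preservation with the ball-volume formula $\vol(B(\cdot, \eta)) = \vol(\Sph^{n-1})\int_0^\eta \sin^{n-1}(s)\,\diff s$, which gives $\vol(B(\cdot, \eta)) \le \tfrac{\vol(\Sph^{n-1})}{n}\eta^n$ via $\sin s \le s$. Applied to the preimage $T^{-1}(B(\bar m_0, \eta))$, this produces the upper bound
\[
\inf\rho_1 \cdot \vol\bigl(T^{-1}(B(\bar m_0, \eta))\bigr) \;\le\; \nu(B(\bar m_0, \eta)) \;\le\; \sup\rho_2 \cdot \tfrac{\vol(\Sph^{n-1})}{n}\eta^n.
\]
The delicate third step is to produce a matching \emph{lower} bound on $\vol(T^{-1}(B(\bar m_0, \eta)))$ for an appropriate choice of $\eta$: since $y_0$ lies $r$-close to $\bar m_0$, the $c$-convexity of the Brenier potential $u$ forces its $c$-subdifferential at points neighboring $m_0$ to cluster near $y_0$, so that a substantial region must be mapped by $T$ into a neighborhood of $\bar m_0$. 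Quantifying this via the degenerate Jacobian of $\exp_{m_0}$ at the cut locus (the factor $(\sin\theta/\theta)^{n-1}$ in the volume form) and optimizing over $\eta$ gives an inequality of the form $(2\pi r)^n \ge \tfrac{\inf\rho_1}{\sup\rho_2}\bigl[\tfrac{n\vol(\Sph^n)}{2\vol(\Sph^{n-1})}\bigr]^2$, contradicting $r < r_\ast$.

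The main obstacle is this third step: whereas cyclic monotonicity easily yields the \emph{upper} diameter control $A_r \subset B(m_0, 2r)$, the quantitative \emph{lower} bound on $\vol(T^{-1}(B(\bar m_0, \eta)))$ with the precise dimensional constants requires tracking carefully how the $c$-convex potential concentrates mass near the cut locus, which is the technical core of the argument and the source of the somewhat intricate constant appearing in the statement.
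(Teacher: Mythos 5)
The paper offers no self-contained proof here: it simply cites \cite[Theorem 2]{delanoe2006gradient} (proved there for $\rho_1\equiv1$) and remarks that the general case is a straightforward adaptation. Your attempt is therefore an independent reconstruction, and it has two genuine gaps. First, the ``straightforward algebra'' in Step~1 is incorrect. From $c$-cyclic monotonicity with $d(m_0,y_0)=\pi-r$, $d(m_0,T(m))\le\pi$ and $d(m,y_0)\le(\pi-d(m,m_0))+r$, writing $a=d(m,T(m))$ and $b=d(m,m_0)$, one gets
\[
a^2 \le (\pi-b+r)^2 + \bigl[\pi^2-(\pi-r)^2\bigr] = (\pi-b+r)^2 + r(2\pi-r),
\]
so the extra term $r(2\pi-r)>0$ prevents you from concluding $a\le\pi-b+r$. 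The best you get along these lines is $a+b\le\pi+r+\sqrt{2\pi r}$, i.e.\ a correction of order $\sqrt{r}$ rather than $r$, which changes the diameter bound on $A_r$ from $O(r)$ to $O(\sqrt{r})$ and hence the exponents of $\inf\rho_1/\sup\rho_2$ in any resulting estimate. As it stands, Step~1 asserts a false inequality.

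Second, and more fundamentally, Step~3 --- the lower bound on $\vol\bigl(T^{-1}(B(\bar m_0,\eta))\bigr)$ --- is not a proof but a placeholder: you acknowledge it as ``the technical core'' and only gesture at ``the degenerate Jacobian of $\exp_{m_0}$ at the cut locus.'' No mechanism is given that would quantitatively force a region of controlled size near $m_0$ to be mapped into a small ball about $\bar m_0$; continuity alone gives no volume lower bound, and the $c$-convexity argument you invoke needs to be made effective (this is precisely what the Delano\"e--Loeper proof does, via the Jacobian degeneracy, and what you would need to reproduce). Related to this, the inclusion $A_r\subset B(m_0,2r)$ derived in Step~1 is never actually used in Steps~2--3, so the logical chain connecting the three steps is left open. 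In summary, the proposal identifies sensible ingredients (cyclic monotonicity, measure preservation, ball-volume comparison) but does not close the argument, and the one computation it does carry out contains an error.
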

\begin{proof}
In \cite[Theorem 2]{delanoe2006gradient}, we give the proof for $\rho_1\equiv 1$. But we could adapt their proof in the general case. We leave the detail for the interested readers.
\end{proof}

Still on the standard sphere $\Sph^n$ with the canonical round metric, we have the following regularity estimate.
\begin{lemma}{(\cite[Theorem 2.4]{loeper11} and \cite[Theorems 1.1 and 1.2]{liu2009interior})}\label{Regularity}
Let $\mu$ and $\nu$ be two Borel probability measures with positive densities with respect to the standard volume element $\mu=\rho_1\, dvol_{can}$ and $\nu =\rho_2\, dvol_{can}$ on $\Sph^n$. Assume that $$0<c_1<\rho_i<c_2$$ for $i=1,2$, where $c_1,c_2$ are positive constants. 
Let $T$ be the optimal transport map pushing forward $\mu$ onto $\nu$, and let $u : \Sph^n \to \R$ be its potential function satisfying \eqref{AMG}. Then for any $\beta\in (0,1)$, there exists some constant $C_\beta>0$ such that
\begin{equation}
\label{C1}
\|u\|_{C^{1,\beta}}\le C_\beta 
\end{equation}
Moreover, if we assume in addition that $$\rho_i\in C^{0,\alpha}$$ for $i=1,2$, then there exists some positive constant $C>0$ such that 
\begin{equation}
\label{C2}
\|u\|_{C^{2,\alpha}}\le C
\end{equation}
\end{lemma}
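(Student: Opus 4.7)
The plan is to treat the $C^{1,\beta}$ and $C^{2,\alpha}$ bounds separately, relying on the MTW machinery that is now standard for optimal transport on the sphere. The common starting point is Loeper's theorem \cite{loeper11} establishing that the canonical round sphere $(\Sph^n, g_{can})$ satisfies the (A3S) condition \eqref{A3S} with an explicit positive constant. Combined with the density bounds $0 < c_1 \leq \rho_i \leq c_2$ and Lemma \ref{Awaycut}, which forces $T(m)$ to remain uniformly away from the cut locus of $m$, this places us in a regime where the Monge-Ampère type equation \eqref{AMG} is uniformly elliptic in local charts and has right-hand side bounded from above and below.

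For the $C^{1,\beta}$ estimate, my plan is to reproduce Loeper's argument in \cite{loeper11}. The MTW positivity implies that the $c$-subdifferential of a $c$-convex potential $u$ satisfies a connectedness property, and that $c$-sublevel sets enjoy convexity properties analogous to classical convex sets. Combined with the Monge-Ampère measure bounds coming from $c_1 \leq \rho_i \leq c_2$ and a John lemma type normalization on $c$-sections, one obtains a quantitative strict $c$-convexity of $u$, which is known to translate into $u \in C^{1,\beta}$ for every $\beta \in (0,1)$, with a constant depending only on $n, c_1, c_2, \beta$ and the geometry of the sphere.

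For the $C^{2,\alpha}$ bound, I would appeal to the Liu-Trudinger-Wang interior regularity theory \cite{liu2009interior}. Using the $C^{1,\beta}$ bound and the stay-away estimate, equation \eqref{AMG} becomes, in local coordinates around each point, a fully nonlinear elliptic equation with $C^{0,\alpha}$ right-hand side. The crucial step is a uniform a priori bound on $\nabla^2 u$, obtained by applying the maximum principle to a carefully chosen auxiliary function built from the second derivatives of $u$; the MTW positivity of $(\Sph^n, g_{can})$ is what allows the bad terms in the resulting differential inequality to be absorbed. Once a pointwise bound on $\nabla^2 u$ is in hand, the equation linearizes to a uniformly elliptic linear equation in non-divergence form with $C^{0,\alpha}$ coefficients, and standard Schauder theory upgrades the bound to the desired $C^{2,\alpha}$ estimate.

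The main obstacle I expect is the second-order a priori bound. In general Monge-Ampère problems, bounds on the densities alone do not yield $C^2$ regularity; counterexamples (Loeper, Ma-Trudinger-Wang) show that without MTW positivity, singularities in the second derivatives can develop even for smooth densities. The strictly positive MTW tensor on the sphere is precisely the structural input that closes the maximum principle argument, and the delicate choice of auxiliary function in \cite{liu2009interior} is what converts this positivity into the quantitative Hölder estimate \eqref{C2}. The rest of the proof is by standard bootstrapping.
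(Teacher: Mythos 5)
Your proposal is correct and follows essentially the same route as the paper: invoke Lemma \ref{Awaycut} to keep $T$ uniformly away from the cut locus (so the MTW tensor is uniformly positive and the cost is smooth and bounded on the graph of $T$), then cite Loeper \cite[Theorem 2.4]{loeper11} for the $C^{1,\beta}$ bound and Liu--Trudinger--Wang \cite{liu2009interior} for the $C^{2,\alpha}$ bound with $C^{0,\alpha}$ right-hand side. The paper treats these two theorems as black boxes, whereas you sketch their internal mechanics ($c$-convexity and John-type normalization for Loeper, the Pogorelov-type second-derivative maximum principle plus Schauder for LTW), but the logical skeleton and the cited inputs are identical.
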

\begin{proof}
Applying Lemma~\ref{Awaycut}, we know that $T$ stays uniformly away from the cut locus:  
there exists $\delta>0$ such that, for every $x\in \Sph^n$,
\[
d(x,T(x)) \le \pi - \delta.
\]
In particular, the Ma--Trudinger--Wang (MTW) tensor of the cost is uniformly positive below and above on the graph of~$T$.  
Hence, by \cite[Theorem~2.4]{loeper11}, property \eqref{C1} follows.

Moreover, since $T$ is uniformly away from the cut locus and both densities satisfy  
$\rho_i \in C^{0,\alpha}$ for $i=1,2$, we deduce that
\[
\frac{f_1(x)}{\,f_2(T(x))\,}\, \bigl|\det \nabla_x \nabla_y c(x,T(x))\bigr|
    \in C^{0,\alpha}(\Sph^n).
\]
Therefore, by \cite[Theorems~1.1 and~1.2]{liu2009interior}, property \eqref{C2} holds as well.  
\end{proof}

We can now turn to the proof of Theorem~\ref{thm:stability}.  
\\

{\it Proof of Theorem \ref{thm:stability}}\\

The proof proceeds by a contradiction argument.
Let us then assume that for some $\delta_0>0$ and for any fixed $\alpha'\in (0,\alpha)$  one can find  a sequence $(f_1^k)_k\in \C^{0,\alpha}(\Sph^n,\R_+)$ such that
$$\forall k\in \N,\quad  \int_{\Sph^n} f_1^k\,d\vol_{can} = \vol_{can}(\Sph^n),$$
\begin{equation}\label{eq:cvfct}
\left|\left| f_1^k - 1\right|\right|_{\C^{0,\alpha}} \underset{k\to\infty}{\longrightarrow} 0
\end{equation}
and
\begin{equation}\label{eq:contrad}
\forall k\in \N,\quad \left|\left| T_1^k - Id \right|\right|_{\C^{1,\alpha'}} \geq \delta_0
\end{equation}
where $T_1^k$ stands for the optimal transport map sending $\vol_{can} $ onto $f_1^k\,d\vol_{can}$. Since the sequence $(f_1^k)_k$ converges in $\C^{0,\alpha}$, it follows that it is uniformly bounded in $\C^{0,\alpha}$, i.e.
$$\exists C>0,\quad \forall k\in \N,\quad \left|\left| f_1^k \right|\right|_{\C^{0,\alpha}}\leq C. $$
Therefore, by Lemma \ref{Regularity},  an a priori estimate holds, that is, 
\begin{equation}
\label{estimate1}
\exists C>0,\quad \forall k\in \N,\quad \left|\left| T_1^k \right|\right|_{\C^{1,\alpha}}\leq C.
\end{equation} 
In other words, the sequence of optimal transport maps $(T_1^k)_k$ is uniformly bounded in $\C^{1,\alpha}$. So by using the compact embedding of Hölder spaces (Arzela-Ascoli theorem)
$$\forall \alpha' \in (0,\alpha),\quad \C^{1,\alpha}(\Sph^n,\Sph^n) \hookrightarrow \C^{1,\alpha'}(\Sph^n,\Sph^n), $$ 
with the choice $ \alpha' \in (0,\alpha)$ given by the contradiction assumption \eqref{eq:contrad}, we can extract a subsequence, still denoted $(T_1^k)_k\in \C^{1, \alpha'}(\Sph^n,\Sph^n)$, which converges towards   some $S\in \C^{1, \alpha'}(\Sph^n,\Sph^n)$:
\begin{equation}\label{eq:convsubseq}
\left|\left| T_1^k - S \right|\right|_{\C^{1,\alpha'}} \underset{k\to\infty}{\longrightarrow} 0
\end{equation}
We are now going to prove that this limit $S$ is equals to the optimal transport map $Id$ between $\vol_{can}$ and $\vol_{can}$. First, we show that $S:\Sph^n\to \Sph^n$ is actually a transport map pushing forward $\vol_{can}$ onto $\vol_{can}$. Indeed, since for all $k$, $T_1^k$ is a transport map from $\vol_{can}$ onto $f_1^k\,d \vol_{can}$, it holds that for all continuous functions $\phi : \Sph^n\to\R$, and for all $k\in\N$,
\[
\int_{\Sph^n} \phi\circ T_1^k(y)\,d\vol_{can}(y) = \int_{\Sph^n} \phi(x)f_1^k(x)\,d\vol_{can}(x).
\]
Let $k\to\infty$, the convergence \eqref{eq:convsubseq} gives that the left-hand side converges to $\int_{\Sph^n} \phi\circ S(y)\,d\vol_{can}(y)$, and the convergence \eqref{eq:cvfct} gives that the right-hand side converges to $\int_{\Sph^n} \phi(x)\,d\vol_{can}(y)$, so we get that for all continuous functions $\phi : \Sph^n\to\R$,
\[
\int_{\Sph^n} \phi\circ S(y)\,d\vol_{can}(y) = \int_{\Sph^n} \phi(x)\,d\vol_{can}(y)
\]
whence $S_\#(\vol_{can}) = \,\vol_{can}$.
Second, arguing by continuity of the quadratic cost induced by the canonical distance on $\Sph^n$, we note that $S$ is optimal for the quadratic cost, as a limit of the optimal transport maps $T_1^k$ for this cost. Or equivalently, the corresponding potential function $u_k$ converges to $u$  the potential function of $S$, which satisfies the Monge-Amp\`ere equation (\ref{AMG}) for $f_1=f_2=1$. By \cite[Theorem 3.1]{delanoe2015smoothness}, $u$ is $c$-convex and thus, it is a potential function of $S$, which is an optimal transport map.

We have therefore seen that the map $S$ is an optimal transport map for the quadratic cost between the distributions $\vol_{can}$ and $\,\vol_{can}$. By uniqueness of this optimal map in Lemma \ref{LemmaMcCann}, we get that $$S = Id.$$
However, passing to the limit when $k\to\infty$ in \eqref{eq:contrad}, we get that
\[
\left|\left| S - Id \right|\right|_{\C^{1,\alpha'}} \geq \delta_0,
\]
which gives us the sought contradiction. The proof is achieved.\qed\\

We can now turn to the proof of Theorem~\ref{thm:stabilitybis}. To keep the exposition light and avoid unnecessary technical detours, we only provide a brief outline of the argument.\\

\textit{Sketch of the proof of Theorem~\ref{thm:stabilitybis}.}\\

By \cite[Lemma 4.1]{delanoe2015smoothness}, the linearized operator of 
\[
\displaystyle\frac{f_2(T(x))}{\left|\det \nabla_x\nabla_y\, c(x, T(x))\right|}\left(\nabla^{2}u + \nabla^{2}c(x,T(x))\right)
\]
at any admissible potential \(u_{1}\in C^{k+2,\alpha}\) is an isomorphism from \(C^{k+2,\alpha}_0\) onto \(C^{k,\alpha}_0\), for every \(k\ge 0\), where \(C^{k,\alpha}_0\) denotes the affine Banach space of
\(C^{k,\alpha}\) real functions with average on $M$ equal to $0$.  
Once this invertibility is known, the continuity method (see \cite[Section 4]{delanoe2015smoothness}) applies in a standard way and, combined with the inverse function theorem, yields the required stability estimate. We do not reproduce the full argument here, as it follows closely the reasoning in the cited reference.

Alternatively, when $k=0$, the proof is as same as the one of  Theorem \ref{thm:stability}. When $k>0$, we differentiate the equation (\ref{C2}) and the desired results follow from the classical regularity theory of linear elliptic PDE. We leave the detail for the interested readers.

\qed

\end{document}